\newtheorem{thm}{Theorem}
 \newtheorem{lem}[thm]{Lemma}
 \newtheorem{prop}[thm]{Proposition}
\theoremstyle{definition}
\newtheorem{remark}[thm]{Remark}
 \providecommand{\seq}[1]{\left<#1\right>}
\providecommand{\norm}[1]{\left\Vert#1\right\Vert}
\begin{document}

   \title[ On determining the domain of the adjoint operator]
   {On determining the domain of the adjoint operator.}
   \author{ Micha\l{} Wojtylak}
   \address{
   Institute of Mathematics\\
   Jagiellonian University\\
   \L ojasiewicza 6\\
   30-348 Krak\'ow\\
   Poland}
   \email{michal.wojtylak@gmail.com
}

\begin{abstract}

A theorem that is of aid in computing the domain of the adjoint operator is provided. It may
serve e.g. as a criterion for selfadjointness of a symmetric operator, for normality of a
formally normal operator or for $H$--selfadjointness of an  $H$--symmetric operator.
Differential operators and operators given by an infinite matrix  are considered as examples.
\end{abstract}
  \subjclass[2000]{Primary 47A05; Secondary  47B50, 47B15, 47F05}
\keywords{Selfadjoint operator, normal operator, $H$--selfadjoint operator, commutator.}
\thanks{The author gratefully acknowledges the assistance of the Polish Ministry of Higher Education and Science grant NN201 546438. }

\maketitle

\section*{Introduction}
 In the literature there exist many criteria for selfadjointness of symmetric operators. As a root of the present research one should mention the paper by Driessler and Summers \cite{driesum}, which presents a criterion for selfadjointness  connected with the notion of domination (relative boundedness) and the first commutator. Later on that result has been
 extended by Cicho\'n, Stochel and Szafraniec (\cite{dominate2}) and by the author of the present paper  (\cite{Wojtylak07,Wojtylak08}). The aim of this note is to
generalize this result in such way that it serves simultaneously as a criterion for normality
of a  formally normal operator as well as a criterion for selfadjointness of symmetric
operator in a Krein space. Furthermore, an important issue will be  illustrating this
generalization with various examples.

Let us describe now the framework of the present research. Given a pair $(A,A_0)$ of operators
in a Hilbert space, with $A$ closable and densely defined and $A_0\subseteq A^*$, we want to
provide a necessary condition for the equality $\bar{ A_0}=A^*$ . This condition should not
involve the operator $A^*$ itself but the operators $A$ and $A_0$ only. The main interest will
lie in the following instances.
\begin{itemize}
\item[(a0)] $A$ is a symmetric operator, $A_0=A$;
\item[(a1)] $A$ is a formally normal operator, $A_0=A^*\!\!\mid_{\mathcal{D}( A)}$;
\item[(a2)] $A$ is a $q$--formally normal operator  with $q\in(0,\infty)$, $A_0=A^*\!\!\mid_{\mathcal{D}(
A)}$;
\item[(a3)]  $\mathcal{D}(A)\subseteq\mathcal{D}(A^*)$ and the graph norms of $A$ and $A^*$ are equivalent
on $\mathcal{D}(A)$, $A_0=A^*\!\!\mid_{\mathcal{D}(A)}$;
\item[(a4)] $A$ is a $H$--symmetric operator,  where $H\in\mathbf{B}(\mathcal{K})$ is
selfadjoint and boundedly invertible,  $A_0=HAH^{-1}$.
\end{itemize}
Note that the equality $\bar A_0=A^*$  means in the above cases that, respectively, $\bar A$
is selfadjoint, $\bar A$ is  normal, $\bar A$ is $q$--normal, $\mathcal{D}(\bar
A)=\mathcal{D}(A^*)$, and $\bar A$ is $H$--selfadjoint (see the Preliminaries for this and for
definitions of the classes appearing above). After these explanations we can present the main
result of the paper. The theorem is proved later on in a slightly stronger form as Theorem
\ref{main4}, cf. Remark \ref{r1}. If  $S$ is an operator in a Hilbert space $\mathcal{K}$ then
$\mathcal{D}( S)$ and $\mathcal{R}(S)$ denote, respectively,  the domain and the range of $S$
and ${\mathop{\rm WOT\ lim}}$ stands for the limit in the weak operator topology.
\begin{thm}\label{intro}
Let  $A$ be a closable, densely defined operator in a Hilbert space $\mathcal{K}$ and let
$A_0\subseteq A^*$. If there exists a sequence
$(T_n)_{n=0}^\infty\subseteq\mathbf{B}(\mathcal{K})$ such that
$$
{\mathop{\rm WOT\ lim}_{n\to\infty}}\  T_n=I_\mathcal{K},
$$
\begin{equation}\label{inclran}
\mathcal{R}(T_n)\subseteq\mathcal{D}(\bar A),\qquad
\mathcal{R}(T_n^*)\subseteq\mathcal{D}({\bar A_0}),\qquad n\in\mathbb{N}
\end{equation}
and
\begin{equation}\label{komintro}
\sup_{n\in\mathbb{N}}\norm{\bar AT_n-T_n\bar A}<+\infty,
\end{equation}
 then $\bar A_0=A^*$.
\end{thm}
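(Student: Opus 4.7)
The plan is to establish the nontrivial inclusion $A^*\subseteq\bar A_0$; the reverse inclusion $\bar A_0\subseteq A^*$ is immediate, since $A_0\subseteq A^*$ and $A^*$ is closed. For a fixed $y\in\mathcal{D}(A^*)$, the natural approximating sequence is $y_n:=T_n^*y$, which lies in $\mathcal{R}(T_n^*)\subseteq\mathcal{D}(\bar A_0)$ by \eqref{inclran}. My strategy is to show that $y_n\to y$ and $\bar A_0 y_n\to A^* y$ \emph{weakly} in $\mathcal{K}$, and then to invoke closedness of $\bar A_0$: since its graph is a closed subspace of $\mathcal{K}\times\mathcal{K}$, it is weakly closed by Mazur's theorem, which forces $y\in\mathcal{D}(\bar A_0)$ with $\bar A_0 y=A^* y$.

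First I would make two preliminary observations. Since $\mathcal{R}(T_n)\subseteq\mathcal{D}(\bar A)$, the operator $\bar A T_n$ is everywhere defined and closed, hence bounded by the closed graph theorem; together with \eqref{komintro} this shows that the commutator $C_n:=\bar A T_n - T_n\bar A$, a priori defined on $\mathcal{D}(\bar A)$, extends to an operator in $\mathbf{B}(\mathcal{K})$ with $M:=\sup_n\norm{C_n}<\infty$. Next, for $x\in\mathcal{D}(\bar A)$, using $T_n^* y\in\mathcal{D}(\bar A_0)\subseteq\mathcal{D}(A^*)$ and $T_n x\in\mathcal{D}(\bar A)$, I would compute
\[
\langle\bar A_0 T_n^* y,x\rangle=\langle T_n^* y,\bar A x\rangle=\langle y,T_n\bar A x\rangle=\langle y,\bar A T_n x\rangle-\langle C_n^* y,x\rangle=\langle T_n^* A^* y - C_n^* y,x\rangle.
\]
Density of $\mathcal{D}(\bar A)$ in $\mathcal{K}$ then yields the key identity $\bar A_0 T_n^* y=T_n^* A^* y-C_n^* y$.

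The technical heart of the proof, which I expect to be the main obstacle, is to show that $C_n^* y\to 0$ weakly. For $x\in\mathcal{D}(\bar A)$,
\[
\langle C_n^* y,x\rangle=\langle A^* y,T_n x\rangle-\langle T_n^* y,\bar A x\rangle,
\]
and both terms converge to $\langle A^* y,x\rangle$ by the weak operator convergence of $T_n$ and $T_n^*$ to the identity; the uniform bound $\norm{C_n^*}\le M$ then extends $\langle C_n^* y,x\rangle\to 0$ from the dense subset $\mathcal{D}(\bar A)$ to all of $\mathcal{K}$ via a standard approximation argument. In view of the identity from the previous step, $y_n=T_n^* y\to y$ and $\bar A_0 y_n\to A^* y$ weakly, so Mazur's theorem applied to the graph of $\bar A_0$ completes the proof.
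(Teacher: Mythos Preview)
Your proof is correct and follows essentially the same route as the paper: the paper likewise takes $f_n=T_n^*f$ for $f\in\mathcal{D}(A^*)$, verifies the weak convergences $f_n\rightharpoonup f$ and $A^*f_n\rightharpoonup A^*f$ by the same commutator computation (packaged there as Proposition~\ref{komcond} together with Remark~\ref{r1}), and then closes with the equivalent Mazur-type argument that $\mathcal{D}(\bar A_0)$, being a weakly dense closed subspace of $(\mathcal{D}(A^*),\norm{\cdot}_{A^*})$, must equal $\mathcal{D}(A^*)$. Your direct appeal to weak closedness of the graph of $\bar A_0$ is just a cosmetic rephrasing of the same closure step.
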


In the classical literature like  \cite{berezanskiy,kato,schechter} one can  find a technique
of proving selfadjointness based on computing the relative bound. The method presented
above is an alternative approach, based rather on the notions of commutativity and domination.
An example of a first order symmetric differential operator from \cite{Wojtylak08} shows the
difference between those two approaches.  

In the symmetric case (a0) the techinque presented in the theorem above was already used in the literature in the context of differential operators on manifolds  \cite{BMS,Gaffney} and graphs \cite{Golenia}. In \cite{dominate3} one can find examples of applications of the domination techniques to symmetric operators.  Therefore, in the present paper we do not focus our
attention on the (a0) class, but show possible applications of the main result in the classes (a1)--(a4).

 The content of the present paper is the following. Section \ref{s:P} has   a  preliminary character, but already in the consecutive section
  we prove the main result of the paper. In Section \ref{normalops} we consider the class (a1) of formally
normal operators, extending the results from \cite{Wojtylak08}. In Section \ref{matrixops} we
will consider $H$--symmetric operators (class (a4)) given by infinite matrices.

In Sections \ref{s:blizej} we  make a link with a theory of commutative domination in the
sense of \cite{poulsen,dominate1,Wojtylak11}.  Namely, the sequence $(T_n)_{n=0}^\infty$ in
Theorem \ref{intro} above may be in many cases chosen as
$$
T_n = n^m(S-{\mathrm i} \cdot n)^{-m},\quad n\in\mathbb{N},
$$
where $S$ is a selfadjoint operator and $m\geq 1$, examples can be found in the already
mentioned work \cite{dominate3}. However, this approach requires computing the commutator
\eqref{komintro}. In Theorem \ref{blizej} we replace \eqref{komintro} by a condition involving
the commutator $SA-AS$, the new assumption being stronger then \eqref{komintro}  but
nevertheless easier to calculate. Again, we formulate the result in the general setting of the
pair $(A_0,A)$, the case $SA-AS=0$ is the announced link with commutative domination. In
Section \ref{DO} we apply Theorem \ref{blizej} to a first order differential operator $A$ with
nonconstant coefficients. A necessary conditions, expressed in terms of coefficients, for $A$
being of class (a3)  and for $\mathcal{D}(\bar A)=\mathcal{D}(A^*)$ are provided. As the
reader noticed there are so far  no applications of the main result to the class (a2).

\section{Preliminaries}\label{s:P}

Through the whole paper $(\mathcal{K},\seq{\cdot,-})$ stands for a Hilbert space. The sum and
product of unbounded operators is understood in a standard way, see e.g. \cite{DS}. We put
    \[
    {\mathrm{ad}}(S,T):=ST-TS.
    \]
 We say that an operator
$S$ in $\mathcal{K}$ is {\it bounded} if $\norm{Sf}\leq c\norm f$ for  all $f\in\mathcal{D}(
S)$ and some $c\geq 0$.  We write $\mathbf{B}(\mathcal{K})$ for the space of all bounded
operators with domain equal $\mathcal{K}$,  stressing  the fact that \textit{not} every
bounded operator is in $\mathbf{B}(\mathcal{K})$.

Let $A$ be a closable, densely defined operator. We say that $A$ is \textit{symmetric} if
 $A\subseteq A^*$, \textit{selfadjoint} if $A=A^*$.

Let $q\in(0,+\infty)$, we say that $A$ is \textit{$q$--formally normal}  if
$\mathcal{D}(A)\subseteq\mathcal{D}(A^*)$ and  $\norm{A^*f}=\sqrt q\norm{Af}$ for
$f\in\mathcal{D}(A)$. We say that $A$ is \textit{$q$--normal} if $A$ is $q$--formally normal
and $\mathcal{D}(A)=\mathcal{D}(A^*)$. We refer the reader to \cite{Ota02,OtaSzafraniec07} for
a treatment on $q$--normals and related classes of operators. Note that (a2) together with
$\bar A_0=A^*$ gives  $q$--normality of $\bar A$. Indeed, since the graph norms of
$A_0=A^*\!\!\mid_{\mathcal{D}(A)}$ and $A$ are equivalent on $\mathcal{D}(A)$, we get
$\mathcal{D}({\bar A})=\mathcal{D}(A^*)$, i.e. $\bar A$ is $q$--normal. We call $A$
\textit{formally normal} (\textit{normal} ) if it is 1-formally normal (1-normal,
respectively).

Let $H\in\mathbf{B}(\mathcal{K})$ be selfadjoint and boundedly invertible. We say that $A$ is
\textit{$H$--symmetric}  if $A\subseteq H^{-1}A^*H$, \textit{$H$-selfadjoint} if  $A=
H^{-1}A^*H$. If we introduce an indefinite inner product on $\mathcal{K}$ by
$[f,g]=\seq{Hf,g}$, $f,g\in\mathcal{K}$, then  $(\mathcal{K},[\cdot,-])$  is a Krein space,
see \cite{AJ,bognar}. Defining $A^+$ as the adjoint of $A$ with respect to $[\cdot,-]$ we
easily see that $A^+=H^{-1}A^*H$. Hence, Theorem \ref{intro} can suite as a criterion for
selfadjointness of a closed symmetric operator in a Krein space, cf. \cite{Wojtylak08}.
Nevertheless, we will not use the indefinite inner product and the operator $A^+$ in the
present paper.

We also say that $A$ is \textit{essentially selfadjoint} (respectively,  essentially
$q$--normal, essentially $H$--selfadjoint) if $\bar A$ is selfadjoint (respectively,
 $q$--normal,  $H$--selfadjoint).

The following facts will be frequently used later on. If $S$ and $T$ are densely defined
operators in $\mathcal{K}$ and $ST$ is densely defined then $(ST)^*\supseteq T^*S^*$. If
additionally $S\in\mathbf{B}(\mathcal{K})$ then
\begin{equation}\label{ref1}
(ST)^*=T^*S^*.
\end{equation}

We say that an operator {\it $A$  dominates an operator $B$ on a linear space $\mathcal{E}$}
if $\mathcal{E}\subseteq\mathcal{D}(A)\cap\mathcal{D}( B)$ and there exists $c>0$ such that
$$
\norm{ Bf}\leq c(\norm{ Af} + \norm f),\quad f\in\mathcal{E}.
$$
If $\mathcal{E}=\mathcal{D}( B)$ then we  say that $A$ {\it dominates} $B$. Note that if both
operators are closed, then $A$ dominates $B$ if and only if $\mathcal{D}(
B)\subseteq\mathcal{D}(A)$, by the closed graph theorem.

\section{Approximate units for an unbounded operator. Main result. }\label{secd}
Let  $A$ be closable and densely defined, let  $A_0\subseteq A^*$ and let
$T\in\mathbf{B}(\mathcal{K})$. Consider the following conditions.
\begin{enumerate}
\item[(f1)]{\rm the  commutator ${{\mathrm{ad}}(T,\bar A)}$ is densely defined and bounded in $\mathcal{K}$;}
 \item[(f2)]{\rm     $T^*\mathcal{D}(A^*)\subseteq\mathcal{D}({\bar {A_0}})$ .}
 \end{enumerate}

If a sequence $(T_n)_{n=0}^\infty\subseteq\mathbf{B}(\mathcal{K})$ tends in the weak operator
topology to $I_\mathcal{K}$ and is such that each of the operators $T_n$ ($n\in\mathbb{N}$)
satisfies (f1,f2)  we will call it an  (f){\it--approximate unit for the pair $(A,A_0)$}. This
notion has some connections with quasicentral approximate units and the unbounded derivation,
see \cite{voiculescu} and the papers quoted therein.

\begin{prop}\label{komcond}
Let  $A$ be closable and densely defined, let  $A_0\subseteq A^*$ and let
$(T_n)_{n=0}^\infty\subseteq\mathbf{B}(\mathcal{K})$ be  an {\rm(f)}--approximate unit for
$(A,A_0)$. Then the following conditions are equivalent:
\begin{equation}\label{combdd}
\sup_{n\in\mathbb{N}}\norm{{{\mathrm{ad}}(T_n,\bar A)}}<+\infty;
\end{equation}
\begin{equation}\label{combddstar}
\sup_{n\in\mathbb{N}}\norm{\overline{{\mathrm{ad}}(T_n^*,A^*)}}<+\infty;
\end{equation}
    \begin{equation}\label{wotlim0}
    {\mathop{\rm WOT\ lim}_{n\to\infty}}\overline{{\mathrm{ad}}(T_n,\bar A)}=0;
    \end{equation}
\begin{equation}\label{wotlimstar}
    {\mathop{\rm WOT\ lim}_{n\to\infty}}\overline{{\mathrm{ad}}(T_n^*,A^*)}=0.
    \end{equation}
\end{prop}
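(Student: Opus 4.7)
The plan is to reduce all four conditions to norm/WOT-control of a single pair of adjoint bounded operators. Set $B_n:=\overline{\mathrm{ad}(T_n,\bar A)}$; condition (f1) places it in $\mathbf{B}(\mathcal{K})$. The central step is the adjoint identity
\begin{equation*}
\overline{\mathrm{ad}(T_n^*,A^*)}=-B_n^*,\qquad n\in\mathbb{N}.
\end{equation*}

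To prove the identity I fix $f$ in the dense natural domain of $\mathrm{ad}(T_n,\bar A)$ and $g\in\mathcal{D}(A^*)$ and manipulate
\begin{equation*}
\seq{\mathrm{ad}(T_n,\bar A)f,g}=\seq{T_n\bar Af,g}-\seq{\bar AT_nf,g}.
\end{equation*}
The first summand becomes $\seq{\bar Af,T_n^*g}$ via \eqref{ref1}, and then $\seq{f,A^*T_n^*g}$ by the adjoint relation $\seq{\bar Ah,k}=\seq{h,A^*k}$ for $h\in\mathcal{D}(\bar A),\,k\in\mathcal{D}(A^*)$; this second step is legitimate precisely because (f2) places $T_n^*g$ inside $\mathcal{D}(\bar{A_0})\subseteq\mathcal{D}(A^*)$. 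The second summand becomes $\seq{f,T_n^*A^*g}$ directly from $g\in\mathcal{D}(A^*)$. The net result is $\seq{\mathrm{ad}(T_n,\bar A)f,g}=\seq{f,-\mathrm{ad}(T_n^*,A^*)g}$, so $\mathrm{ad}(T_n^*,A^*)$ is bounded on the dense set $\mathcal{D}(A^*)$ and its closure coincides with $-B_n^*$ on all of $\mathcal{K}$.

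With the identity in hand, \eqref{combdd}$\Leftrightarrow$\eqref{combddstar} follows from $\norm{B_n}=\norm{B_n^*}$, and \eqref{wotlim0}$\Leftrightarrow$\eqref{wotlimstar} from the elementary fact that a bounded sequence converges to zero in WOT if and only if its adjoints do. The implication \eqref{wotlim0}$\Rightarrow$\eqref{combdd} is Banach--Steinhaus. The substantive step is \eqref{combdd}$\Rightarrow$\eqref{wotlim0}: the same computation as above yields
\begin{equation*}
\seq{B_nf,g}=\seq{T_n\bar Af,g}-\seq{T_nf,A^*g},\qquad f\in\mathcal{D}(\bar A),\ g\in\mathcal{D}(A^*),
\end{equation*}
and weak convergence $T_n\to I_\mathcal{K}$ sends both summands to $\seq{\bar Af,g}$, so $\seq{B_nf,g}\to 0$ on a dense subset of $\mathcal{K}\times\mathcal{K}$. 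The uniform norm bound from \eqref{combdd} and a standard $\varepsilon/3$ argument promote this to full WOT-convergence.

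The main obstacle is the adjoint identity itself: $\bar A T_n$ admits no automatic adjoint simplification analogous to \eqref{ref1} (since $\bar A$ is unbounded on the outside), and it is only condition (f2) that rescues the calculation by placing $T_n^*g$ inside $\mathcal{D}(A^*)$, so the adjoint relations close up. Once that identity is established, the four equivalences are little more than bookkeeping around Banach--Steinhaus.
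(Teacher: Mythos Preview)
Your argument is correct and follows essentially the same route as the paper: establish the adjoint relation $\overline{\mathrm{ad}(T_n^*,A^*)}=-\,\overline{\mathrm{ad}(T_n,\bar A)}^{\,*}$ via (f1) and (f2), read off the equivalences \eqref{combdd}$\Leftrightarrow$\eqref{combddstar} and \eqref{wotlim0}$\Leftrightarrow$\eqref{wotlimstar}, obtain \eqref{wotlim0}$\Rightarrow$\eqref{combdd} from Banach--Steinhaus, and derive \eqref{combdd}$\Rightarrow$\eqref{wotlim0} by testing $\seq{B_nf,g}$ on $f\in\mathcal{D}(\bar A)$, $g\in\mathcal{D}(A^*)$ together with the uniform bound. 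One small point worth making explicit: the displayed identity $\seq{B_nf,g}=\seq{T_n\bar Af,g}-\seq{T_nf,A^*g}$ for \emph{all} $f\in\mathcal{D}(\bar A)$ (rather than only $f\in\mathcal{D}(\mathrm{ad}(T_n,\bar A))$, which depends on $n$) is exactly what is needed for the $\varepsilon/3$ step, and it follows cleanly from the adjoint identity you already proved by writing $\seq{B_nf,g}=\seq{f,B_n^*g}$ and using $B_n^*g=A^*T_n^*g-T_n^*A^*g$ for $g\in\mathcal{D}(A^*)$.
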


\begin{proof}
 Fix $n\in\mathbb{N}$.  The operator ${\mathrm{ad}}(A^*,T_n^*)$ is
densely defined by (f2) and  is  contained in ${\mathrm{ad}}(T_n,\bar A)^*$. By (f1) the
operator ${\mathrm{ad}}(T_n,\bar A)^*$ belongs to $\mathbf{B}(\mathcal{K})$. Hence,
$$
\overline{{\mathrm{ad}}(A^*,T_n^*)}={\mathrm{ad}}(T_n,\bar A)^*.
$$
This shows the  equivalences $\eqref{combdd}\Leftrightarrow\eqref{combddstar}$ and $\eqref{wotlim0}\Leftrightarrow\eqref{wotlimstar}$.

Suppose now that  $\eqref{combdd}$ is satisfied. The weak convergence of $(T_n)_{n=0}^\infty$
to identity implies that for $f\in\mathcal{D}({{\mathrm{ad}}(T_n,\bar A)})$,
$g\in\mathcal{D}(A^*)$ one has
  $$
    \seq{{\mathrm{ad}}(T_n,\bar A) f,g}=\seq{\bar Af,T_n^*g}-\seq{T_nf,A^*g}\stackrel{n\to\infty}\longrightarrow \seq{\bar Af,g}-\seq{f,A^*g}=0.
 $$
Since $\mathcal{D}({{\mathrm{ad}}(T_n,\bar A)})$ and $\mathcal{D}(A^*)$ are dense in
$\mathcal{K}$  we have \eqref{wotlim0} by a standard triangle inequality argument.

The implication  $\eqref{wotlim0}\Rightarrow\eqref{combdd}$ holds, since every sequence
convergent in the weak operator topology is bounded in the norm, by the uniform boundedness
principle.

\end{proof}
After these preparations we can easily derive the main result of the paper.

\begin{thm}\label{main4}
Let  $A$ be closable and densely defined, let  $A_0\subseteq A^*$ and let
$(T_n)_{n=0}^\infty\subseteq\mathbf{B}(\mathcal{K})$ be  an {\rm(f)}--approximate unit for
$(A,A_0)$. If
 $$
\sup_{n\in\mathbb{N}}\norm{{{\mathrm{ad}}(T_n,\bar A)}}<+\infty,
$$
 then $\bar A_0=A^*$.
\end{thm}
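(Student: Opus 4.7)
The plan is to prove the only nontrivial inclusion $A^* \subseteq \bar{A_0}$ by a graph-closedness argument. Fix an arbitrary $g \in \mathcal{D}(A^*)$ and set $g_n := T_n^* g$. Condition (f2) immediately gives $g_n \in \mathcal{D}(\bar{A_0})$ for every $n$, so these are legitimate approximants in the graph of $\bar{A_0}$. Since $T_n \to I_\mathcal{K}$ in WOT, the adjoints satisfy $T_n^* \to I_\mathcal{K}$ in WOT as well, and therefore $g_n \to g$ weakly. The goal is then to show $\bar{A_0} g_n \to A^* g$ weakly and invoke the fact that the graph of the closed operator $\bar{A_0}$ is norm-closed, hence weakly closed, in $\mathcal{K}\times\mathcal{K}$.

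To compute $\bar{A_0} g_n$, use the inclusion $\bar{A_0}\subseteq A^*$ together with (f2) to rewrite
$$
\bar{A_0} g_n = A^* T_n^* g = T_n^* A^* g + \mathrm{ad}(A^*,T_n^*) g,
$$
where the last identity is well-defined precisely because (f2) places $T_n^* g$ inside $\mathcal{D}(A^*)$, so $g$ lies in the natural domain of $\mathrm{ad}(A^*,T_n^*)$. The term $T_n^* A^* g$ converges weakly to $A^* g$ by the WOT-convergence of $T_n^*$.

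For the commutator term, apply Proposition \ref{komcond}: the hypothesis $\sup_n\|\mathrm{ad}(T_n,\bar A)\|<+\infty$ is condition \eqref{combdd}, which is equivalent to \eqref{wotlimstar}, namely ${\mathop{\rm WOT\ lim}}_n\,\overline{\mathrm{ad}(T_n^*,A^*)} = 0$. Hence $\mathrm{ad}(A^*,T_n^*) g = -\overline{\mathrm{ad}(T_n^*,A^*)}\,g \to 0$ weakly. Combining the two pieces gives $\bar{A_0} g_n \to A^* g$ weakly. Weak closedness of $\mathrm{graph}(\bar{A_0})$ forces $(g,A^* g)\in\mathrm{graph}(\bar{A_0})$, so $g\in\mathcal{D}(\bar{A_0})$ and $\bar{A_0} g = A^* g$, as required.

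The main obstacle is purely bookkeeping: one must be sure that the algebraic identity $A^* T_n^* g = T_n^* A^* g + \mathrm{ad}(A^*,T_n^*) g$ really holds on vectors $g\in\mathcal{D}(A^*)$, which is exactly what (f2) secures, and that the value of the unclosed operator $\mathrm{ad}(A^*,T_n^*)$ on $g$ coincides with that of its closure — automatic since $g$ lies in the original domain. All the analytic content (uniform bound, weak-operator convergence of the commutator) is already packaged in Proposition \ref{komcond}, so after these domain checks the proof reduces to a one-line invocation of the closed-graph/weak-closedness principle.
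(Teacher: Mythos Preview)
Your proof is correct and follows essentially the same approach as the paper: both fix $g\in\mathcal{D}(A^*)$, set $g_n=T_n^*g\in\mathcal{D}(\bar A_0)$ via (f2), decompose $A^*T_n^*g = T_n^*A^*g + \mathrm{ad}(A^*,T_n^*)g$, and invoke Proposition~\ref{komcond} for the commutator term. The only cosmetic difference is in the closing step: the paper phrases it as ``$\mathcal{D}(\bar A_0)$ is weakly dense in $(\mathcal{D}(A^*),\|\cdot\|_{A^*})$, hence norm-dense, hence all of $\mathcal{D}(A^*)$'', whereas you phrase it as ``the graph of $\bar A_0$ is norm-closed hence weakly closed in $\mathcal{K}\times\mathcal{K}$'' --- both are the same Mazur-type argument.
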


\begin{proof}

Fix an arbitrary $f\in\mathcal{D}(A^*)$ and consider   the sequence $f_n:=T_n^*f$
($n\in\mathbb{N}$), which is contained in $\mathcal{D}({\bar A_0})$ by (f2). Observe that
\begin{equation}\label{weak1}
\seq{f_n,g}\to \seq{f,g}  \quad (n\to\infty),\qquad g\in\mathcal{K},
\end{equation}
since $T^*_n$ tends to $I_\mathcal{K}$ in the weak operator topology. Furthermore, note that
\begin{equation}\label{weak2}
\seq{A^*f_n,g}\to \seq{A^*f,g} \quad (n\to\infty),\qquad g\in\mathcal{K}.
\end{equation}
Indeed, since $f$ belongs to $\mathcal{D}(A^*)$, which is contained in
$\mathcal{D}({{\mathrm{ad}}(A^*,T_n^*)})$ by (f2), we have
    $$
    \seq{A^*T_n^*f,g}=\seq{T_n^*A^*f,g}+\seq{{{\mathrm{ad}}(A^*,T_n^*)}f,g},\quad g\in\mathcal{K}.
    $$
The first summand tends with $n\to \infty$ to $\seq{A^*f,g}$  by the convergence of
$T_n^*$, the second summand goes  to zero by Proposition \ref{komcond}.
Hence, \eqref{weak2} is shown.

Consider now the graph norm $\norm{\cdot}_{A^*}$ on $\mathcal{D}(A^*)$, which makes
$\mathcal{D}(A^*)$ a Banach space. Convergences \eqref{weak1} and \eqref{weak2} and the fact
that $f\in\mathcal{D}(A^*)$ was taken arbitrary imply that  $\mathcal{D}({\bar A_0})$ is
weakly dense in $(\mathcal{D}(A^*), \norm{\cdot}_{A^*})$. Since $\mathcal{D}({\bar A_0})$ is a
linear space, it is  dense in $\mathcal{D}(A^*)$ in the $\norm{\cdot}_{A^*}$--topology as
well. But $\mathcal{D}({\bar A_0})$ is closed in $\norm{\cdot}_{A^*}$--topology as
$A_0\subseteq A^*$. Hence, $\bar A_0=A^*$.

\end{proof}

\begin{remark}\label{r1} Observe that the following condition
\begin{enumerate}
\item[(e)] $\mathcal{R}( T)\subseteq\mathcal{D}(\bar A)$, $ \mathcal{R}(T^*)\subseteq\mathcal{D}({\bar{ A_0}})$
\end{enumerate}
implies (f1,f2). Indeed, if (e) holds then $\mathcal{D}(\bar
A)\subseteq\mathcal{D}({{\mathrm{ad}}(T,\bar A)})$. Furthermore, $\bar
AT\in\mathbf{B}(\mathcal{K})$, by the closed graph theorem.
 Since  $\mathcal{R}(T^*)\subseteq\mathcal{D}({\bar
A_0})\subseteq\mathcal{D}(A^*)$, we have $A^*T^*\in\mathbf{B}(\mathcal{K})$, again by the
closeness of the graph. By $(T\bar A)^*=A^*T^*$, the operator $T\bar A$ is bounded. Hence (f1)
is showed, (f2) is obvious. Therefore Theorem \ref{intro} is proved as well.
\end{remark}

\begin{remark}
 It was shown in \cite{Wojtylak08}  that in the
(a4) case conditions
\begin{enumerate}
\item[(d1)] the operators $T A$ and  $AT$  are bounded and the domain of the commutator
$\mathcal{D}({{\mathrm{ad}}(T,A)})$ is dense in $\mathcal{K}$;
\item[(d2)] the operator    $A_0T^*$  is densely defined.
\end{enumerate}
  (presented here in an equivalent form) imply (e), see Proposition 2 and the consecutive remarks. Hence, (d1,d2) together with (a4) imply  (f1,f2). Therefore,
 Theorem 3 of \cite{Wojtylak08} can be seen as a special case of Theorem \ref{main4} above.
\end{remark}

\section{Some normal operators}\label{normalops}

In this section we will concentrate on the (a1) class.  We begin with a proposition that
unifies Theorem 6 of \cite{Wojtylak08} and  Proposition 1 of \cite{nussbaum69}. If $E$ is the
spectral measure of a normal operator $N$ and $\mathbb{D}$ is the closed unit disc then we set
$$
\mathcal{B}(N):=\bigcup_{n\in\mathbb{N}}\mathcal{R}(E(n\mathbb{D})).
$$

\begin{prop}
 Let $\mathcal{K}$ be a Hilbert space, and let $A$ be a formally normal operator in $\mathcal{K}$. If there
exists a normal operator $N$ in $\mathcal{K}$ such that
$\mathcal{B}(N)\subseteq\mathcal{D}(A)$ and the spectral measure $E$ of $N$ satisfies the
condition
                            $$
                             \sup_{n\in\mathbb{N}} \norm{{\mathrm{ad}}(A, E(n\mathbb{D}))} <+\infty
                                   $$
then $A$ is essentially normal.
\end{prop}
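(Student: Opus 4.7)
My plan is to apply Theorem \ref{intro} (equivalently Theorem \ref{main4} via Remark \ref{r1}) in the setting of class (a1), taking the approximate unit to be the sequence of spectral projections
$$
T_n := E(n\mathbb{D}), \qquad n\in\mathbb{N}.
$$
Since each $T_n$ is an orthogonal projection, $T_n^*=T_n$, and because $n\mathbb{D}\uparrow\mathbb{C}$ the projections $E(n\mathbb{D})$ tend to $I_\mathcal{K}$ strongly (hence in the weak operator topology) by the usual properties of the spectral integral. So the WOT-convergence hypothesis of Theorem \ref{intro} is automatic.

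Next I would verify condition (e) of Remark \ref{r1}, which suffices to secure both (f1) and (f2). By definition $\mathcal{R}(T_n)=\mathcal{R}(E(n\mathbb{D}))\subseteq\mathcal{B}(N)\subseteq\mathcal{D}(A)$; since $A_0=A^*\!\!\mid_{\mathcal{D}(A)}$ in the (a1) setting, we also have $\mathcal{D}(A_0)=\mathcal{D}(A)$, and therefore
$$
\mathcal{R}(T_n)\subseteq\mathcal{D}(A)\subseteq\mathcal{D}(\bar A),\qquad \mathcal{R}(T_n^*)=\mathcal{R}(T_n)\subseteq\mathcal{D}(A_0)\subseteq\mathcal{D}(\bar{A_0}).
$$
Hence (e) holds, giving (f1,f2), and the uniform bound on $\norm{{\mathrm{ad}}(A,E(n\mathbb{D}))}=\norm{{\mathrm{ad}}(T_n,\bar A)\!\!\mid_{\mathcal{D}(A)}}$ together with the fact that $\mathcal{D}(A)$ is a core for $\bar A$ on which ${\mathrm{ad}}(T_n,\bar A)$ agrees with ${\mathrm{ad}}(A,E(n\mathbb{D}))$ yields the commutator bound \eqref{combdd} in Theorem \ref{main4}. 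Thus Theorem \ref{main4} gives $\bar{A_0}=A^*$.

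Finally, I would translate this into essential normality of $A$. Formal normality of $A$ means $\norm{Af}=\norm{A^*f}=\norm{A_0 f}$ for every $f\in\mathcal{D}(A)$, so the graph norms of $A$ and $A_0$ coincide on $\mathcal{D}(A)$. Consequently $\mathcal{D}(\bar A)=\mathcal{D}(\bar{A_0})$ and $\norm{\bar Af}=\norm{\bar{A_0}f}$ on this common domain. Combined with $\bar{A_0}=A^*$ just proved, this gives $\mathcal{D}(\bar A)=\mathcal{D}(A^*)$ together with $\norm{\bar Af}=\norm{A^*f}$ for $f\in\mathcal{D}(\bar A)$, which is exactly the definition of $\bar A$ being normal, i.e., $A$ being essentially normal.

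The one delicate step I would expect to scrutinise is the passage from the pointwise bound on ${\mathrm{ad}}(A,E(n\mathbb{D}))$ (defined a priori only on $\mathcal{D}(A)$) to the uniform bound $\sup_n\norm{{\mathrm{ad}}(T_n,\bar A)}<\infty$ demanded in Theorem \ref{main4}. The point is that because $\mathcal{R}(T_n)\subseteq\mathcal{D}(A)\subseteq\mathcal{D}(\bar A)$ and $T_n\mathcal{D}(\bar A)\subseteq\mathcal{D}(A)\subseteq\mathcal{D}(\bar A)$, the operator ${\mathrm{ad}}(T_n,\bar A)$ is densely defined on $\mathcal{D}(\bar A)$, and on a core it agrees with ${\mathrm{ad}}(A,E(n\mathbb{D}))$, so the hypothesised bound transfers intact; the rest of the argument is then a clean application of the main theorem.
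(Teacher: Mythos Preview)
Your proposal is correct and follows exactly the same approach as the paper: the paper's proof is the single sentence ``We set $T_n:=E(n\mathbb{D})$ ($n\in\mathbb{N}$) and apply Theorem \ref{intro},'' and your write-up simply spells out the details the paper leaves implicit (WOT convergence of the spectral projections, verification of condition (e) of Remark~\ref{r1}, passage of the commutator bound from $\mathcal{D}(A)$ to $\mathcal{D}(\bar A)$, and the translation of $\bar{A_0}=A^*$ into essential normality via the (a1) discussion in the Preliminaries).
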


\begin{proof}
We set $T_n:=E(n\mathbb{D})$ ($n\in\mathbb{N}$) and apply Theorem \ref{intro}.
\end{proof}
Next let us provide an  analogue of Theorem 7 of \cite{Wojtylak08}, see also there for
references to works on selfadjoint Dirac operators. Take the Hilbert space
$\mathcal{K}:=(L^2({\mathbb{R}}^m))^k$, where $k,m\in\mathbb{N}$ and let ${\mathcal
C^\infty_0({\mathbb{R}}^m)}$ denote the complex space of infinite differentiable functions on
${\mathbb{R}}^m$ with compact supports. Consider  the differential operator $A$ in
$\mathcal{H}$ given by
    \[
     Au:= {\mathrm i}^{-1}\sum_{l=1}^m \alpha_l\frac{\partial
    u}{\partial x_l}+Qu,\quad u\in\mathcal{D}({A})={(\mathcal C^\infty_0({\mathbb{R}}^m))^k},
    \]
where $\alpha_1,\dots, \alpha_m$ are complex $k\times k$ matrices and
$Q:{\mathbb{R}}^m\to\mathbb{C}^{k\times k}$ is a locally integrable matrix--valued function.
Note that ${(\mathcal C^\infty_0({\mathbb{R}}^m))^k}\subseteq\mathcal{D}(A^*)$ and
 $$
A^*u= {\mathrm i}^{-1}\sum_{l=1}^m \alpha^*_l\frac{\partial
    u}{\partial x_l}+Q^*u,\quad u\in{(\mathcal C^\infty_0({\mathbb{R}}^m))^k}.
 $$
A direct calculation shows that the following conditions
\begin{equation}\label{Afnorm}
\begin{array}{rcll}
 \alpha_l^*\alpha_r&=&\alpha_l\alpha_r^*, \qquad&\text{for }  r,l=1,\dots, m;\\
Q(x)Q^*(x)&=&Q^*(x)Q(x)\qquad &\text{for a.e. } x\in{\mathbb{R}}^m;\\
\alpha_l^*Q(x)&=&\alpha_lQ^*(x),\qquad &\text{for a.e. } x\in{\mathbb{R}}^m,\ l=1,\dots, m.
\end{array}
\end{equation}
imply $\norm{Au}=\norm{A^*u}$ for $u\in{(\mathcal C^\infty_0({\mathbb{R}}^m))^k}$, i.e. formal
normality of $A$. We say that $Q$ \textit{satisfies the local H\"older condition}
 if for every $n\in\mathbb{N}$ there exists a $b_n\in(0, 1]$ such that
$$
                                  \sup_{|x|,|y|\leq n, \ x\neq y}  \frac{     |Q(x) - Q(y)|   }{|x-y|^{b_n}}           < \infty,
$$
where $|\cdot|$ denotes the Euclidean norm on ${\mathbb{R}}^m$ and ${\mathbb{R}}^k$.

\begin{prop}\label{normaldirac}
Assume that conditions \eqref{Afnorm} hold  and that the function $Q$ satisfies the local
H\"older condition. Then  $A$ is essentially normal in $\mathcal{K}$.
\end{prop}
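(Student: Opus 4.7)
The plan is to apply Theorem \ref{intro} to the pair $(A,A_0)$ with $A_0=A^*\!\mid_{\mathcal{D}(A)}$, i.e. to the setup (a1); the conclusion $\bar{A_0}=A^*$ together with the formal normality of $A$ supplied by \eqref{Afnorm} is precisely essential normality of $A$. The previous proposition is not directly applicable, as no dominating normal operator is visible, so I would build the approximate unit $(T_n)$ by hand from a cut--off and a mollifier.

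Fix $\phi\in\mathcal{C}^\infty_0(\mathbb{R}^m)$ with $\phi\equiv 1$ on the unit ball, $0\leq\phi\leq 1$, and $\mathrm{supp}\,\phi\subseteq 2\overline{\mathbb{D}}$; put $\phi_n(x):=\phi(x/n)$, and let $\rho$ be a standard nonnegative even mollifier with $\mathrm{supp}\,\rho\subseteq\overline{\mathbb{D}}$ and $\rho_\varepsilon(x):=\varepsilon^{-m}\rho(x/\varepsilon)$. Define
$$
T_n u := \phi_n\cdot(\rho_{\varepsilon_n}\ast u),\qquad u\in\mathcal{K},
$$
with $\varepsilon_n\downarrow 0$ to be chosen later. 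Because $\rho$ is real and even, one computes $T_n^* v=\rho_{\varepsilon_n}\ast(\phi_n v)$; both $T_n$ and $T_n^*$ send $\mathcal{K}$ into $(\mathcal{C}^\infty_0(\mathbb{R}^m))^k=\mathcal{D}(A)=\mathcal{D}(A_0)$, which verifies the range inclusions \eqref{inclran}. The weak convergence $T_n\to I_{\mathcal{K}}$ is immediate from the decomposition $\phi_n(\rho_{\varepsilon_n}\ast u)-u=(\phi_n-1)(\rho_{\varepsilon_n}\ast u)+(\rho_{\varepsilon_n}\ast u-u)$, each piece vanishing when tested against any fixed $v\in\mathcal{K}$.

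The commutator estimate \eqref{komintro} is the central point. On the core $\mathcal{D}(A)=(\mathcal{C}^\infty_0(\mathbb{R}^m))^k$, since each $\alpha_l$ is a constant matrix the first--order parts of $AT_nu$ and $T_nAu$ match (the derivatives pass through $\rho_{\varepsilon_n}\ast$), and a direct computation leaves
$$
[A,T_n]u=\mathrm{i}^{-1}\sum_{l=1}^m(\partial_l\phi_n)\alpha_l(\rho_{\varepsilon_n}\ast u)+\phi_n\bigl(Q\cdot(\rho_{\varepsilon_n}\ast u)-\rho_{\varepsilon_n}\ast(Qu)\bigr).
$$
The first sum is of order $n^{-1}$ via $\norm{\partial_l\phi_n}_\infty=n^{-1}\norm{\partial_l\phi}_\infty$. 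The second summand is a truncated Friedrichs commutator, and it is here that the local H\"older hypothesis enters: the cut--off $\phi_n(x)$ forces $|x|\leq 2n$, while the support of $\rho_{\varepsilon_n}(x-y)$ gives also $|y|\leq 2n+1$ and $|x-y|\leq\varepsilon_n$, whence the local H\"older condition yields $|Q(x)-Q(y)|\leq K_n\varepsilon_n^{b_n}$ for constants $K_n,b_n$ depending only on the H\"older data at radius $2n+1$. The $L^2$--norm of this summand is consequently bounded by $K_n\varepsilon_n^{b_n}\norm{u}$, and choosing $\varepsilon_n$ so small that $K_n\varepsilon_n^{b_n}\leq 1$ secures \eqref{komintro}.

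The principal obstacle is precisely this second summand: without local H\"older regularity, the commutator of $Q$ with mollification need not be uniformly bounded on $L^2$, even after truncation by $\phi_n$, and no choice of $\varepsilon_n$ would rescue the argument. Once the commutator bound is in place, Theorem \ref{intro} delivers $\bar{A_0}=A^*$, which together with the formal normality furnished by \eqref{Afnorm} is the asserted essential normality of $A$.
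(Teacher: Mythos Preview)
Your proposal is correct and follows essentially the same approach as the paper, which simply refers back to the construction of $T_n$ in Theorem~7 of \cite{Wojtylak08}; that construction is precisely a cut--off times a mollifier, and your commutator computation and the use of the local H\"older condition to control the Friedrichs commutator $\phi_n\bigl(Q\cdot(\rho_{\varepsilon_n}\ast u)-\rho_{\varepsilon_n}\ast(Qu)\bigr)$ via a suitable choice of $\varepsilon_n$ reproduce that argument in full detail.
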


\begin{proof}[Sketch of the proof]
We apply Theorem \ref{intro} to the (a1) instance.  The construction
of the sequence $T_n$ follows exactly the same lines as in the proof of Theorem 7 of
\cite{Wojtylak08}.
\end{proof}

\section{Infinite $H$--selfadjoint matrices}\label{matrixops}

In  \cite{dominate3}  Cicho\'n, Stochel and Szafraniec investigated symmetric integral and
matrix operators. The main tools were the domination techniques from their previous paper
\cite{dominate2} based on the computation of the first and second commutator. 
The discussion on applicability of these criteria in the Jacobi matrix case can be found in \cite{dominate2}, in the present work we will show how
the first commutator reasonings  can be applied to $H$--symmetric operators, restricting  to
the  matrix operators on  $\ell^2=\ell^2(\mathbb{N})$ ($\mathbb{N}=\left\{1,2,\dots\right\})$. 
By $\ell_0^2$ we denote the space of all complex sequences with finite number of nonzero
entries.
 Given a matrix $[a_{k,l} ]_{k,l\in\mathbb{N}}$ , we define the matrix operator $\tilde A$ by
 $$
\mathcal{D}({\tilde A})=\left\{ \{\xi_k \}_{k\in\mathbb{N}} \in \ell^2: \sum_{k\in
\mathbb{N}}\left| \sum_{l\in\mathbb{N}} |a_{k,l} \xi_l |\right|^2 < +\infty  \right\},
$$
$$
\tilde A\{\xi_k\}_{k\in\mathbb{N}}= \left\{ \sum_{l\in\mathbb{N}} a_{k,l}
\xi_l\right\}_{k\in\mathbb{N}}.
$$
Let us suppose that the matrices $[h_{k,l}]_{k,l\in \mathbb{N}}$ and
$[g_{k,l}]_{k,l\in\mathbb{N}}$ have the following properties:
\begin{itemize}
\item[(h1)] $[h_{k,l}]_{k,l\in\mathbb{N}}$ and $[g_{k,l}]_{k,l\in\mathbb{N}}$ are hermitian--symmetric matrices;
\item[(h2)] $[g_{k,l}]_{k,l\in \mathbb{N}}$ is a band matrix, i.e.
there exists a $p\in\mathbb{N}$ such that $g_{k,l}=0$ for $|k-l|>p$ ;
\item[(h3)]  $s_g:=\sup_{k,l\in \mathbb{N}}
|g_{k,l}|<+\infty$ and $[h_{k,l}]_{k,l\in\mathbb{N}}$ defines a bounded operator;
\item[(h4)] $\sum_{j\in \mathbb{N}}h_{k,j}g_{j,l}=\sum_{j\in \mathbb{N}} g_{k,j}h_{j,l}=\delta_{k,l}$ for $k,l\in \mathbb{N}$.
\end{itemize}
Then $[h_{k,l}]_{k,l\in \mathbb{N}}$ and $[g_{k,l}]_{k,l\in \mathbb{N}}$  define bonded,
selfadjoint operators on $\ell^2$, which will be called $G$ and $H$, respectively; obviously
$G=H^{-1}$. An example of such a matrix $[h_{k,l}]_{k,l\in \mathbb{N}}$, additionally equal to
$[g_{k,l}]_{k,l\in\mathbb{N}}$, is a block--diagonal matrix with each block on the diagonal
being of the anti--diagonal form
$$
\pm\begin{pmatrix}0&\dots& 1\\ \vdots& & \vdots \\ 1 &\dots& 0 \end{pmatrix}
$$
not necessarily of the same size. The proposition below is an $H$--symmetric version of
Theorem 13 of \cite{dominate3}.

\begin{prop}\label{matrix}  Let $\{c_n \}_{n\in \mathbb{N}}$ be a sequence
of real numbers and $m\geq 0$ be an integer such that
       the matrices
\begin{equation}\label{M1}
\left[ \frac{|a_{k,l+q}|}{1+|c_l|^m} \right]_{k,l\in \mathbb{N}},\qquad q\in\left\{-p,\dots,
p\right\}
\end{equation}
$($with $a_{k,r}:=0$ for $r\leq0)$ and
\begin{equation}\label{M2}
\left[ |a_{k,l}|\frac{|c_k-c_l|}{1+|c_k|+|c_l|} \right]_{k,l\in \mathbb{N}}
\end{equation}
define  bounded operators on   $\mathcal{K}$. If
\begin{equation}\label{AG}
\sum_{q=-p}^p a_{k,l+q}g_{l+q,l}=\sum_{q=-p}^p  g_{k,k+q}\bar a_{l,k+q}, \quad
k,l\in\mathbb{N},
\end{equation}
then the operator  $A=\tilde A\!\!\mid_{\ell_0^2(\mathbb{N})}$ is essentially $H$--selfadjoint
and $\tilde A=\bar A$.
\end{prop}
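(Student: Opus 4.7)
The plan is to apply Theorem~\ref{intro} to the (a4) setup with $A_0 = HAH^{-1} = HAG$. First, hypothesis \eqref{AG} encodes precisely the inclusion $A_0 \subseteq A^*$: using the band structure (h2) of $G$, the $(k,l)$ entries of $AG$ and $GA^*$ reduce, respectively, to the finite sums $\sum_{q=-p}^{p} a_{k,l+q} g_{l+q,l}$ and $\sum_{q=-p}^{p} g_{k,k+q} \bar a_{l,k+q}$, whose equality in \eqref{AG} gives $HAG = A^*$ at the matrix level and so $A_0 \subseteq A^*$. As the approximating sequence I would take $S$ to be the (generally unbounded) selfadjoint multiplication operator on $\ell^2$ with $Se_k = c_k e_k$, and set
$$
T_n := n^m(S - \mathrm{i} n I)^{-m}, \qquad n \in \mathbb{N}.
$$
Each $T_n$ lies in $\mathbf{B}(\ell^2)$, is diagonal in the standard basis with entries $d_l^{(n)} := n^m(c_l - \mathrm{i} n)^{-m}$ of modulus at most $1$, and $T_n \to I$ strongly.

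To verify the range inclusions \eqref{inclran} I would factorize $(T_n\xi)_l = (1+|c_l|^m)^{-1}\bigl[n^m(1+|c_l|^m)(c_l - \mathrm{i} n)^{-m}\bigr]\xi_l$; the bracketed factor is bounded in $l$ uniformly for each fixed $n$, so \eqref{M1} with $q=0$ yields $\tilde A T_n\xi \in \ell^2$. The coordinate truncations $P_N T_n \xi \in \ell_0^2 \subseteq \mathcal{D}(A)$ then converge to $T_n\xi$ in the graph norm of $\tilde A$ (again by \eqref{M1} applied to the tails), whence $T_n\xi \in \mathcal{D}(\bar A)$. For $\mathcal{R}(T_n^*) \subseteq \mathcal{D}(\bar A_0) = H\mathcal{D}(\bar A)$ one must show $G T_n^* f \in \mathcal{D}(\bar A)$; invoking (h2), the $k$-th coordinate of $\tilde A G T_n^* f$ reduces to a sum over the band $|q|\leq p$ of $a_{k, j+q} g_{j+q, j}$ against a bounded diagonal factor times $f_j$, and the full family \eqref{M1} together with $s_g < \infty$ provides the required $\ell^2$ bound. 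The analogous graph-norm approximation is carried out by truncations of the form $H P_N G T_n^* f \in H\ell_0^2 = \mathcal{D}(A_0)$.

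The commutator bound \eqref{komintro} reduces, in the standard basis, to the entrywise inequality
$$
|a_{k,l}(d_l^{(n)} - d_k^{(n)})| \leq c_m\, |a_{k,l}|\, \frac{|c_k - c_l|}{1+|c_k|+|c_l|}, \qquad k,l,n \in \mathbb{N},
$$
valid for a constant $c_m$ depending only on $m$ (an elementary estimate on the differences of the $m$-th resolvent powers $(x-\mathrm{i}n)^{-m}$). The right-hand side is $c_m$ times the matrix \eqref{M2}, so (M2) gives $\sup_n \|[\bar A, T_n]\| < \infty$. Theorem~\ref{intro} now delivers $\bar A_0 = A^*$, i.e., $\bar A$ is $H$-selfadjoint.

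For the final equality $\tilde A = \bar A$, take $\xi \in \mathcal{D}(\tilde A)$. Then $T_n \xi \in \mathcal{D}(\bar A)$ by the step above, $T_n \xi \to \xi$ in norm, and
$$
\bar A T_n\xi = T_n \tilde A \xi + [\bar A, T_n]\xi \longrightarrow \tilde A \xi
$$
in norm, the second summand going to $0$ by a dominated-convergence estimate on $\ell_0^2$ combined with an $\varepsilon$-argument using the uniform commutator bound. Closedness of $\bar A$ then forces $\xi \in \mathcal{D}(\bar A)$ with $\bar A \xi = \tilde A \xi$. The main obstacle is the second range inclusion: since $T_n^*$ does not commute with $G$, one cannot simply mirror the first argument, and it is precisely the band hypothesis (h2) together with the full shifted family \eqref{M1} that makes the bookkeeping close; moreover the approximating elements must live inside $\mathcal{D}(A_0) = H\ell_0^2$ rather than $\ell_0^2$, which complicates the graph-norm convergence.
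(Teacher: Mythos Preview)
Your proposal is correct and follows essentially the same route as the paper: take $S$ to be the diagonal multiplication by $(c_k)$, set $T_n=n^m(S-{\mathrm i}n)^{-m}$, use \eqref{M1} together with the band structure (h2) of $G$ for the two range inclusions, use \eqref{M2} for the commutator bound, and invoke Theorem~\ref{intro}.

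There are two minor technical differences worth noting. First, for \eqref{komintro} the paper estimates $\norm{{\mathrm{ad}}((S-n{\mathrm i})^{-1},\bar A)}$ via the scalar inequality $n/\bigl(|n{\mathrm i}-c_k|\,|n{\mathrm i}-c_l|\bigr)\le\sqrt{3}/(1+|c_k|+|c_l|)$ and then lifts this to the $m$-th power through the algebraic identity of Lemma~\ref{komm1}; you instead telescope $d_l^{(n)}-d_k^{(n)}$ directly at the scalar level, which amounts to the same computation carried out entrywise. Second, for the equality $\tilde A=\bar A$ the paper dispatches the matter in one line: since $\tilde A$ is $H$--symmetric and $\bar A\subseteq\tilde A$, one has $\bar A\subseteq\tilde A\subseteq H^{-1}\tilde A^{*}H\subseteq H^{-1}A^{*}H=\bar A$. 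Your graph--norm approximation via $T_n\xi$ and the strong convergence $\overline{{\mathrm{ad}}(T_n,\bar A)}\to 0$ is correct, but this sandwiching argument is considerably shorter and avoids the dominated--convergence bookkeeping you flag as an obstacle.
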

Note that \eqref{M1} implies that $\ell_0^2\subseteq\mathcal{D}(A)$ and condition \eqref{AG}
obviously means that $A$ is $H$--symmetric.   For the proof  of the proposition we will need
the following lemma, also to be used in the next section. As usually,  $\rho(S)$ stands for the
resolvent set of $S$.

\begin{lem}\label{komm1}
Let $A$ be a closable, densely defined operator,  let $S$ be a closed densely defined operator
and let $z\in\rho(S)$. If
 $$
 \mathcal{D}({S^m})\subseteq\mathcal{D}(\bar A),\quad \mathcal{D}({S^{*m}})\subseteq\mathcal{D}(A^*)\ \ \textrm{ for some }m\in\mathbb{N},
$$
then ${\mathrm{ad}}((S-z)^{-1},\bar{ A})$ is a closable densely defined operator. If it is
additionally
 bounded then
    \begin{equation}\label{orazoraz}
\norm{ {\mathrm{ad}}((S-z)^{-m},\bar A)}\leq  m\norm{(S-z)^{-1}}^{m-1}\norm{{\mathrm
ad}((S-z)^{-1},\bar A)}.
    \end{equation}
\end{lem}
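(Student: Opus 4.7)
Write $R := (S-z)^{-1}$ and $R^* = (S^*-\bar z)^{-1}$, both in $\mathbf{B}(\mathcal{K})$ since $\bar z\in\rho(S^*)$. Injectivity of $R$ and $R^*$ gives that $\mathcal{D}(S^k)=\mathcal{R}(R^k)$ and $\mathcal{D}(S^{*k})=\mathcal{R}(R^{*k})$ are dense in $\mathcal{K}$ for every $k$, and the mapping $R(\mathcal{D}(S^k))\subseteq\mathcal{D}(S^{k+1})$ yields in particular $R(\mathcal{D}(S^m))\subseteq\mathcal{D}(S^m)\subseteq\mathcal{D}(\bar A)$, placing the dense set $\mathcal{D}(S^m)$ inside $\mathcal{D}(\mathrm{ad}(R,\bar A))$; symmetrically $\mathcal{D}(S^{*m})\subseteq\mathcal{D}(\mathrm{ad}(R^*,A^*))$. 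A direct shuffle of adjoints, using boundedness of $R,R^*$ and $(\bar A)^*=A^*$, then gives $-\mathrm{ad}(R^*,A^*)\subseteq\mathrm{ad}(R,\bar A)^*$, so the adjoint has dense domain and $\mathrm{ad}(R,\bar A)$ is closable; the same argument with $R^m$ in place of $R$ shows that $\mathrm{ad}(R^m,\bar A)$ is closable as well.

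Now suppose $\mathrm{ad}(R,\bar A)$ is bounded, with closure $T\in\mathbf{B}(\mathcal{K})$ of norm $B$. The estimate rests on the telescoping identity
\[
R^m\bar A f - \bar A R^m f \;=\; \sum_{k=0}^{m-1} R^k (R\bar A - \bar A R) R^{m-1-k} f,
\]
which I would verify on the dense subspace $\mathcal{D}(S^m)$, where the inclusion $R(\mathcal{D}(S^m))\subseteq\mathcal{D}(S^m)\subseteq\mathcal{D}(\bar A)$ ensures that each intermediate $(R\bar A-\bar A R) R^{m-1-k} f$ lies in $\mathcal{D}(\mathrm{ad}(R,\bar A))$ and that the sum collapses. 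On $\mathcal{D}(S^m)$ the right-hand side agrees with the everywhere-defined bounded operator $V:=\sum_{k=0}^{m-1} R^k T R^{m-1-k}$, whose norm is at most $m\norm{R}^{m-1}B$.

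The trickiest remaining step, which I expect to be the main obstacle, is extending the bound from the dense subspace $\mathcal{D}(S^m)$ to the full natural domain of $\mathrm{ad}(R^m,\bar A)$, which turns out to be all of $\mathcal{D}(\bar A)$ because $R^m\mathcal{K}\subseteq\mathcal{D}(S^m)\subseteq\mathcal{D}(\bar A)$. Rather than trying to argue that $\mathcal{D}(S^m)$ is a core for $\bar A$, I would exploit closability directly: the closure of $\mathrm{ad}(R^m,\bar A)$ extends the closure of $V|_{\mathcal{D}(S^m)}$, which by continuity of $V$ and density of $\mathcal{D}(S^m)$ is $V$ itself. Because $V$ is everywhere defined and bounded, this forces $\overline{\mathrm{ad}(R^m,\bar A)}=V$, which yields $\norm{\overline{\mathrm{ad}(R^m,\bar A)}}\leq m\norm{R}^{m-1}B$ as desired.
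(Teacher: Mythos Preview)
Your proof is correct and follows essentially the same route as the paper's: density of $\mathcal{D}(S^m)$ to get the commutator densely defined, the adjoint inclusion $\mathrm{ad}(R,\bar A)^*\supseteq -\mathrm{ad}(R^*,A^*)$ for closability via von Neumann, and the same telescoping identity on $\mathcal{D}(S^m)$ for the norm bound. The paper is terser on the last step---it simply says that density of $\mathcal{D}(S^m)$ and the formula give the estimate---whereas you spell out the passage from the dense subspace to the full domain via closability of $\mathrm{ad}(R^m,\bar A)$ and the fact that a closed operator extending an everywhere-defined bounded $V$ must equal $V$; this is a legitimate point of care that the paper leaves implicit.
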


\begin{proof}
First note that since $z\in\rho(S)$, one has $\bar z\in\rho(S^*)$. Hence, the operators $S^m$
and $S^{*m}$ are closed and densely defined with nonempty resolvent sets
\cite[Thm.VII.9.7]{DS}. Since
$(S-z)^{-1}\mathcal{D}({S^m})=\mathcal{D}({S^{m+1}})\subseteq\mathcal{D}(\bar A)$, the
commutator ${\mathrm{ad}}((S-z)^{-1},\bar A)$ is densely defined. Furthermore, note that
    $$
    {\mathrm{ad}}((S-z)^{-1},\bar{ A})^*\supseteq{\mathrm{ad}}( A^*,(S^*-\bar z)^{-1}).
    $$
The domain of the operator on the right hand side contains $\mathcal{D}({S^{*m}})$, which is
dense in $\mathcal{K}$. By von Neumann's theorem ${\mathrm{ad}}((S-z)^{-1},\bar{ A})$ is
closable. Suppose now that it is also bounded. Since  $\mathcal{D}({S^m})$ is dense in
$\mathcal{K}$, the formula (\cite[Prop.2(i)]{dominate2})
    $$
    {\mathrm{ad}}((S-z)^{-m},\bar A)f=\sum_{j=0}^{m-1} (S-z)^{-j}{\mathrm{ad}}((S-z)^{-1},\bar A)(S-z)^{-m+1+j}f,
    \quad f\in\mathcal{D}({S^m})
    $$
 gives the desired estimate.
\end{proof}

\begin{proof}[Proof of Proposition \ref{matrix}]
First note that by \eqref{M1} we have $\ell_0^2\subseteq \mathcal{D}(A)$. Now define the
selfadjoint operator $S$ by the diagonal matrix $[\delta_{k,l}c_l]_{k,l\in \mathbb{N}}$. By
\eqref{M1} with $q=1$ we obtain that $A(S^m-z)^{-1}$ is a bounded operator, hence $\bar
A(S^m-z)^{-1} \in\mathbf{B}(\ell^2)$ and consequently
\begin{equation}\label{RA}
\mathcal{D}({S^m})\subseteq\mathcal{D}(\bar A).
\end{equation}
Let now $\left\{\xi_l\right\}_{l\in \mathbb{N}}\in\ell_0^2$, then $G(S^{m}-\bar
z)^{-1}\left\{\xi_l\right\}_{l\in \mathbb{N}}\in\ell_0^2=\mathcal{D}(A)$ and
$$
AG(S^{m}-\bar z)^{-1}\left\{\xi_l\right\}_{l\in \mathbb{N}}= AG\left\{\frac{ \xi_l}{c_l^m-\bar
z}\right\}_{l\in \mathbb{N}}=
$$ $$
\left\{\sum_{k\in\mathbb{N}} a_{r,k}\sum_{|k-l|\leq p}g_{k,l}\frac{\xi_l}{c_l^m-\bar
z}\right\}_{r\in \mathbb{N}}= \sum_{q=-p}^p \left\{\sum_{k\in\mathbb{N}} a_{r,l+q}\
g_{l+q,l}\frac{\xi_{l}}{c_{l}^m-\bar z}\right\}_{r\in \mathbb{N}}.
$$
It follows now easily from \eqref{M1} and the assumption (h3) that the operator
$C:=AG(S^{m}-\bar z)^{-1}$ is bounded on $\ell_0^2$.  Since $C^*\supseteq (S^{*m}-\bar
z)^{-1}GA^*$, $C$ is closable. Hence, $\bar AG(S^{m}-\bar z)^{-1}=\bar C\in\mathbf{B}(\ell^2)$
and consequently
\begin{equation}\label{RA*}
\mathcal{D}({S^m})\subseteq\mathcal{D}({\bar A_0}),
 \end{equation}
where $A_0= HAG$, according to (a4).
This together with \eqref{RA} implies that assumption \eqref{inclran} is satisfied with
 $$
 T_n=n^m(S-n{\mathrm i})^{-m},\qquad n\in\mathbb{N}.
 $$
Obviously, $T_n$ tends with $n$ to $I_{\ell^2}$ in the strong operator topology.
 To apply Theorem \ref{intro}  one needs to show that $(T_n)_{n\in\mathbb{N}}$ and $A$ satisfy
\eqref{komintro}. Observe that for $\xi=\{\xi_k\}_{k\in\mathbb{N}} \in\ell_0^2$ one has
$$
{\mathrm{ad}}((S-n {\mathrm i})^{-1},\bar A)\xi=\left\{ \sum_{l\in\mathbb{N}}\frac{  a_{kl}
\xi_l(c_k-c_l)}{(n{\mathrm i}-c_k)(n{\mathrm i}-c_l)}  \right\}_{k\in\mathbb{N}}.
$$
Since (cf. \cite{dominate3} p.769)
$$
\frac{ n}{|n{\mathrm i}-c_k||n{\mathrm i}-c_l|}\leq \frac{\sqrt 3}{1+|c_k|+|c_l|},\quad
n,k,l\in\mathbb{N}
$$
we conclude that
$$
n\norm {{\mathrm{ad}}(S-n{\mathrm i})^{-1}, A)\xi }^2\leq 3 \norm{K}^2 \norm{\xi} ^2, \quad
\xi\in\ell_0^2,\ n\in\mathbb{N},
$$
where $K$ is the bounded operator given by \eqref{M2}. Thanks to \eqref{RA} and \eqref{RA*} we
can apply Lemma \ref{komm1} and obtain that  the commutator ${\mathrm{ad}}((S-n{\mathrm
i})^{-1},\bar A)$ is closable. Hence, it is bounded and
$$
 n \norm{{\mathrm{ad}}((S-n\ {\mathrm i})^{-1},\bar A)}\leq \sqrt 3\norm K ,\quad n\in\mathbb{N}.
$$
By the second part of  Lemma \ref{komm1}
$$
\sup_{n\in\mathbb{N}} n^m \norm{{\mathrm{ad}}((S-n{\mathrm i})^{-m},\bar A)}<+\infty,
$$
which is the desired inequality \eqref{komintro}. Applying Theorem \ref{intro} we get $\bar A_0=A^*$, i.e. the operator $A$ is essentially $H$--selfadjoint. Since $\tilde A$ is
$H$--symmetric and contains $\bar A$, one has $\tilde A=\bar A$.
\end{proof}

\begin{prop}
 Suppose that we are
given real numbers $d \geq 0$, $s\geq 0$ , $\alpha > 2$.  If \eqref{AG} is satisfied and
 \begin{equation}\label{modakl}
|a_{k,l}|\leq \left\{ \begin{array}{ll} d(1 + k + l)/(|k - l|^\alpha), & k\neq l\\
                                       d(k + 1)^s, k=l,\end{array} \right. ,\quad k,l\geq 0
\end{equation}
the operator  $A=\tilde A\!\!\mid_{\ell_0^2(\mathbb{N})}$ is essentially $H$--selfadjoint and
$\tilde A=\bar A$.
\end{prop}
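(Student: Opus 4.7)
The plan is to apply Proposition \ref{matrix} with the explicit choice $c_n=n$ for $n\in\mathbb{N}$ and with $m$ any integer satisfying $m\geq\max(s,1)$. Since condition \eqref{AG} is already in the hypothesis (and $A$ being $H$--symmetric follows as noted after Proposition \ref{matrix}), the entire task reduces to verifying that the matrices \eqref{M1} and \eqref{M2} define bounded operators on $\ell^2$.

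For \eqref{M2} the diagonal vanishes, and for $k\neq l$ the bound \eqref{modakl} gives
\[
|a_{k,l}|\,\frac{|c_k-c_l|}{1+|c_k|+|c_l|}\;\leq\;d\,\frac{1+k+l}{|k-l|^{\alpha}}\cdot\frac{|k-l|}{1+k+l}\;=\;\frac{d}{|k-l|^{\alpha-1}}.
\]
Since $\alpha-1>1$, both row and column sums are dominated by $2d\sum_{n\geq 1}n^{1-\alpha}<\infty$, so Schur's test provides the desired bound on $\ell^2$.

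For \eqref{M1} I would fix $q$ with $|q|\leq p$ and handle the row sums $\sum_l|a_{k,l+q}|/(1+l^m)$ by a dyadic split of $l+q$ according to whether it is much smaller than $k$, comparable to $k$, or much larger. In the extreme regimes the factor $|k-(l+q)|^{-\alpha}$ defeats the numerator $1+k+(l+q)$ because $\alpha>2$; in the comparable regime one uses the bound $\sum_{j\neq k}|k-j|^{-\alpha}<\infty$; the diagonal term $d(k+1)^s/(1+(k-q)^m)$ is bounded because $m\geq s$. For the column sums one writes them as $(1+l^m)^{-1}\sum_k|a_{k,l+q}|$, applies the standard estimate $\sum_{k\neq j}(1+k+j)|k-j|^{-\alpha}=O(j)$ coming from $\alpha>2$, and observes that $\sum_k|a_{k,j}|=O(j^{\max(s,1)})$, which is absorbed by the weight $(1+l^m)^{-1}$ thanks to the choice $m\geq\max(s,1)$.

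The main obstacle is purely computational, namely arranging these Schur test estimates for \eqref{M1} so that a single integer $m$ makes both row and column sums simultaneously bounded; the assumptions $\alpha>2$ and the polynomial growth exponent $s$ enter in a compatible way precisely because $\alpha>2$ yields the summability of $\sum n^{1-\alpha}$. Once both matrices \eqref{M1} and \eqref{M2} are shown to be bounded, Proposition \ref{matrix} applies directly and gives essential $H$--selfadjointness of $A$ together with $\tilde A=\bar A$.
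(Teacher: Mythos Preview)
Your overall strategy coincides with the paper's: apply Proposition \ref{matrix} with $c_n=n$ and check boundedness of \eqref{M1} and \eqref{M2}. The difference lies only in how boundedness of \eqref{M1} is established. The paper does not use the Schur test; instead it proves the stronger Hilbert--Schmidt condition
\[
\sum_{k,l\in\mathbb{N}}\frac{|a_{k,l+q}|^2}{(1+l^m)^2}<\infty,\qquad q=-p,\dots,p,
\]
by invoking the column estimate $\sum_{k}|a_{k,l}|^2=O(l^2+l^{2s})$ from \cite{dominate3} and then choosing $m>s+3/2$. Your Schur--test route is equally valid and in fact needs only $m\geq\max(s,1)$; the trade--off is that the Hilbert--Schmidt argument is a two--line computation, whereas your three--regime splitting for the row sums requires more bookkeeping (and really does need the weight $(1+l^m)^{-1}$ in the ``comparable'' regime, since the naive bound $\sum_{j\neq k}(1+k+j)|k-j|^{-\alpha}=O(k)$ is not uniform in $k$). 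For \eqref{M2} your explicit Schur estimate is exactly what the paper has in mind when it refers to Proposition 14 of \cite{dominate3}.
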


This proposition has  again its symmetric origin in \cite{dominate3}, namely  of Proposition
14.  Note that besides the assumption of $H$--symmetry in \eqref{AG} the matrices
$[g_{kl}]_{kl\in\mathbb{N}}$ and $[g_{kl}]_{kl\in\mathbb{N}}$ are not involved in the
assumptions.

\begin{proof}
We need to show that (cf. \cite{dominate3})
\begin{equation}\label{sum}
\sum_{k,l\in\mathbb{N}}\frac{|a_{k,l+q}|^2}{(1+|c_l|^m)^2}<+\infty,\quad q=-p,\dots, p,
\end{equation}
with $c_l=l$, which will guarantee boundedness of all operators in \eqref{M1}. It was shown in
\cite{dominate3} that
$$
\sum_{k\in\mathbb{N}} |a_{k,l}|^2\leq \mathcal{O}(l^2+l^{2s}).
$$
Hence,
$$
\sum_{k\in\mathbb{N}} |a_{k,l+q}|^2\leq
\mathcal{O}((l+q)^2+(l+q)^{2s})=\mathcal{O}(l^2+l^{2s}),\quad q=-p,\dots, p
$$
and \eqref{sum} holds with $m>s+3/2$.
  Boundedness of the
operator in \eqref{M2} follows the same lines as in the proof of Proposition 14 of
\cite{dominate3}.
\end{proof}

\section{Towards commutative domination}\label{s:blizej}

In this section we will show a relation between the results on commutative
(\cite{dominate1,Wojtylak05,Wojtylak11}) and noncommutative domination
(\cite{dominate2,dominate3,Wojtylak07,Wojtylak08}). One should mention here the work by Nelson
\cite{nelson}, which deals with the symmetric case and analytic vectors. Nevertheless, the aim
of the present paper is to consider classes different then symmetric operators using  simple
graph arguments only. We say that $\mathcal{E}\subseteq\mathcal{D}( S)$ is a {\it core for
$S$} if the graph of $S$ is contained in the closure of the graph $S\!\!\mid_{\mathcal{E}}$. The sybol $\mathcal{D}^{\infty}(A)$ stands for 
$\bigcap_{n=0}^\infty \mathcal{D}(A^n)$.

\begin{thm}\label{blizej}
Let $A$ be a closable, densely defined operator, let $A_0\subseteq A^*$ and let $S$ be a
closed densely defined operator  such that there exists a sequence
$(z_n)_{n=0}^\infty\subseteq\rho(S)$ satisfying
\begin{equation}\label{Omega}
{\mathop{\rm WOT\ lim}_{n\to\infty}}z_n(S-z_n)^{-1}=I_\mathcal{K}.
\end{equation}
Assume that
\begin{itemize}
\item[(i)] $\mathcal{D}^\infty( S)\subseteq\mathcal{D}({ \bar A})$, $\mathcal{D}^\infty({S^*})\subseteq\mathcal{D}({\bar A_0})$,
\item[(ii)] ${\mathrm{ad}}(A_0,S^*)$ is densely defined,
\item[(iii)] there exists a linear subspace $\mathcal{D}\subseteq\mathcal{D}({{\mathrm{ad}}(S,\bar A)})$, which is
a core for $S$ and  $S$ dominates ${\mathrm{ad}}(S,\bar A)$ on
 $\mathcal{D}$,
\end{itemize}
 then $\bar A_0=A^*$. If, additionally, the resolvent set of $A$ is nonempty and
\begin{itemize}
\item[(i')] $\mathcal{D}({S})\subseteq\mathcal{D}(\bar A)$
\item[(iii')] ${\mathrm{ad}}(S,\bar A)f=0$ for $f\in\mathcal{D}$,
 \end{itemize}
 then the resolvents of $A$ and $S$ commute.
 \end{thm}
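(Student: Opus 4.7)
The plan is to apply Theorem \ref{main4} with the approximate unit
$$T_n := z_n (S-z_n)^{-1}, \qquad n\in\mathbb{N},$$
which tends to $I_\mathcal{K}$ in the weak operator topology by \eqref{Omega}. Three things must be checked: the condition (f1) (the commutator $\mathrm{ad}(T_n,\bar A)$ is densely defined and bounded), the condition (f2) ($T_n^*\mathcal{D}(A^*)\subseteq\mathcal{D}(\bar A_0)$), and the uniform bound $\sup_n\|\mathrm{ad}(T_n,\bar A)\|<+\infty$. Once these are in place, Theorem \ref{main4} will immediately yield $\bar A_0=A^*$.

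For the commutator bound I would first extend the domination from (iii): since $\mathcal{D}$ is a core for $S$ and $\|\mathrm{ad}(S,\bar A) f\|\leq c(\|Sf\|+\|f\|)$ on $\mathcal{D}$, the operator $\mathrm{ad}(S,\bar A)|_\mathcal{D}$ extends uniquely to a bounded linear map $B\colon(\mathcal{D}(S),\|\cdot\|_S)\to\mathcal{K}$. A formal resolvent–commutator calculation gives
$$\mathrm{ad}\bigl((S-z_n)^{-1},\bar A\bigr) = -(S-z_n)^{-1}\,\mathrm{ad}(S,\bar A)\,(S-z_n)^{-1},$$
valid a priori on $\mathcal{D}^\infty(S)$, which by (i) is contained in $\mathcal{D}(\bar A)$ and is invariant under $(S-z_n)^{-1}$. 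Multiplying by $z_n$ and using that $\sup_n\|z_n(S-z_n)^{-1}\|<+\infty$ (by \eqref{Omega} and the uniform boundedness principle) together with the identity $S(S-z_n)^{-1}=I+z_n(S-z_n)^{-1}$ (which bounds $(S-z_n)^{-1}$ uniformly as a map $\mathcal{K}\to(\mathcal{D}(S),\|\cdot\|_S)$) gives the uniform bound on $\|\mathrm{ad}(T_n,\bar A)\|$. The density of the domain of $\mathrm{ad}(T_n,\bar A)$ follows because it contains $\mathcal{D}^\infty(S)$, which in light of the hypotheses is dense in $\mathcal{K}$ (this is a standard consequence of the resolvent structure and Lemma \ref{komm1}). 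This verifies (f1) and the required uniform bound.

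The delicate point is (f2). We have $T_n^*=\bar z_n(S^*-\bar z_n)^{-1}$; (i) only gives $\mathcal{D}^\infty(S^*)\subseteq\mathcal{D}(\bar A_0)$, whereas $T_n^*$ maps $\mathcal{K}$ into $\mathcal{D}(S^*)$, which is a priori larger. The strategy is to approximate an arbitrary $g\in\mathcal{D}(A^*)$ by elements $g_k\in\mathcal{D}^\infty(S^*)$ in a suitable norm: then $T_n^*g_k\in\mathcal{D}^\infty(S^*)\subseteq\mathcal{D}(\bar A_0)$, and one passes to the limit in the graph norm of $\bar A_0$. Closing this limit step uses (ii), which guarantees that $\mathrm{ad}(A_0,S^*)$ is densely defined, together with the dual version of the commutator bound from Step 2 obtained via Proposition \ref{komcond}; this is essentially the content of Lemma \ref{komm1} on the adjoint side. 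With (f2) in hand, Theorem \ref{main4} gives $\bar A_0=A^*$. I expect this passage to (f2) to be the main obstacle, because it is where the weak assumption (i) on $\mathcal{D}^\infty$ alone (rather than on a fixed $\mathcal{D}(S^m)$ as in Lemma \ref{komm1}) must be bootstrapped via (ii) and the domination (iii).

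For the second assertion, under the stronger hypotheses (i') and (iii'), the identity $\mathrm{ad}(S,\bar A) f=0$ on $\mathcal{D}$ propagates via the closure to $\mathcal{D}(S)$ (using (i') and the core property of $\mathcal{D}$). Substituting into the resolvent–commutator identity of Step 2 gives $\mathrm{ad}((S-z_n)^{-1},\bar A)=0$, i.e.\ $(S-z_n)^{-1}$ commutes with $\bar A$. Picking any $w\in\rho(A)$ and applying $(A-w)^{-1}$ on both sides of $(S-z_n)^{-1}\bar A\supseteq\bar A(S-z_n)^{-1}$ yields $(A-w)^{-1}(S-z_n)^{-1}=(S-z_n)^{-1}(A-w)^{-1}$, which is commutativity of the resolvents.
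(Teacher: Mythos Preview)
Your overall plan matches the paper's, but there is a real gap at (f2), and the paper closes it in a way you have not anticipated.

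With $T_n=z_n(S-z_n)^{-1}$ one only gets $\mathcal{R}(T_n^*)\subseteq\mathcal{D}(S^*)$, while (i) merely says $\mathcal{D}^\infty(S^*)\subseteq\mathcal{D}(\bar A_0)$. Your approximation scheme for (f2) does not close: to pass to the limit in the graph norm of $\bar A_0$ you would need $\bar A_0 T_n^* g_k = A^* T_n^* g_k$ to converge, and writing $A^* T_n^* g_k = T_n^* A^* g_k + \mathrm{ad}(A^*,T_n^*)g_k$ shows that this requires $g_k\to g$ in the graph norm of $A^*$. Nothing in the hypotheses ensures that $\mathcal{D}^\infty(S^*)$ is dense in $(\mathcal{D}(A^*),\norm{\cdot}_{A^*})$; in fact that density is essentially equivalent to what you are trying to prove. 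Your appeal to Lemma~\ref{komm1} here is also misplaced: that lemma already assumes $\mathcal{D}(S^{*m})\subseteq\mathcal{D}(A^*)$ for a fixed $m$, which is exactly the strengthening of (i) you lack.

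The paper avoids this altogether. It invokes Proposition~8.1 of \cite{Wojtylak07} (a closed-graph/Fr\'echet-space argument): from $\mathcal{D}^\infty(S)\subseteq\mathcal{D}(\bar A)$ and $\mathcal{D}^\infty(S^*)\subseteq\mathcal{D}(\bar A_0)$ one extracts a single $m\in\mathbb{N}$ with $\mathcal{D}(S^m)\subseteq\mathcal{D}(\bar A)$ and $\mathcal{D}(S^{*m})\subseteq\mathcal{D}(\bar A_0)$. The approximate unit is then taken as $T_n=z_n^m(S-z_n)^{-m}$, for which $\mathcal{R}(T_n^*)\subseteq\mathcal{D}(S^{*m})\subseteq\mathcal{D}(\bar A_0)$, so condition (e) of Remark~\ref{r1} (hence (f2)) holds trivially; the commutator bound for the $m$-th power then follows from the first-power bound via Lemma~\ref{komm1}. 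Note also that in the paper assumption (ii) is used not for (f2) but, via von~Neumann's theorem (since $\mathrm{ad}(A_0,S^*)\subseteq\mathrm{ad}(S,A)^*$), to make $\mathrm{ad}(S,\bar A)$ closable, so that the domination inequality on the core $\mathcal{D}$ can be extended to all of $\mathcal{D}(S)$.

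Your argument for the second assertion (commuting resolvents under (i$'$),(iii$'$)) is essentially the paper's.
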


The problem of existence of  a sequence $(z_n)_{n=0}^\infty$ satisfying \eqref{Omega} was
discussed in \cite{Wojtylak08} in detail. In case $S$ is  (similar to) a selfadjoint operators
in Hilbert spaces such a sequence  exists. Note that the precise knowledge of the sequence is
not necessary to apply the theorem.

\begin{proof}
By assumption (ii) and von Neumann's theorem we get ${\mathrm{ad}}(S,A)$ closable. Standard
domination technique (see e.g. Lemma 1 of \cite{Wojtylak08}) gives
 $$
 \mathcal{D}( S)=\mathcal{D}\left({\overline{S\!\!\mid_{\mathcal{D}}}}\right) \subseteq\mathcal{D}\left({\overline{{\mathrm{ad}}(S,\bar A)}}\right).
 $$
  Hence, $S$ dominates ${\mathrm{ad}}(S,\bar A)$,
i.e. for some  $c\geq 0$ we have
\begin{equation}\label{[S,A]f}
\norm{\overline{{\mathrm{ad}}(S,\bar A)}f}\leq c\big(\norm f+\norm{ Sf}\big), \quad
f\in\mathcal{D}({S}).
\end{equation}
We apply \eqref{[S,A]f} to $f:=(S-z_n)^{-1}g\in\mathcal{D}( S)$
 with arbitrary $n\in\mathbb{N}$ and $g\in\mathcal{K}$, getting
\begin{equation}\label{[SA]}
 \begin{array}{c}
\norm{\overline{{\mathrm{ad}}(S,\bar A)}(S-z_n)^{-1}g}\leq c\left(\norm{(S-z_n)^{-1}g}+\norm{S(S-z_n)^{-1}g}\right)\leq\\
\leq  c\left(\norm{(S-z_n)^{-1}}+\norm{z_n(S-z_n)^{-1}+I}\right)\norm g.
 \end{array}
 \end{equation}
 It is now apparent that  there exists a constant $d\geq 0$, such that
\begin{equation}\label{dszac}
\norm{\overline{{\mathrm{ad}}(S,\bar A)}(S-z_n)^{-1}}\leq d ,\quad n\in\mathbb{N}.
\end{equation}
Fix $z\in\rho(S)$, then
    $$
    {\mathrm{ad}}((S-z)^{-1} ,\bar A)
    \supseteq(S-z)^{-1}\bar A (S-z) (S-z)^{-1}     -   (S-z)^{-1}(S-z)  \bar A(S-z)^{-1}=
    $$ $$
                      =(S-z)^{-1}{\mathrm{ad}}(\bar A,(S-z))(S-z)^{-1} =
                      (S-z)^{-1}{\mathrm{ad}}(\bar A,S)(S-z)^{-1}=:C.
                      $$
By \eqref{[SA]} the operator $C$ is bounded, furthermore, it  is also densely defined. Indeed,
the linear space $\mathcal{F}=(S-z)\mathcal{D}$ is contained in $\mathcal{D}( C)$ because
$(S-z)^{-1}\mathcal{F}=\mathcal{D}\subseteq\mathcal{D}({{\mathrm{ad}}(\bar A,S)})$ and
$\mathcal{F}$ is dense in $\mathcal{K}$ because $z\in\rho(S)$ and $\mathcal{D}$ is a core for
$S$.

 By Proposition 8.1 of
\cite{Wojtylak07} there exists $m\in\mathbb{N}$ such that
$\mathcal{D}({S^m})\subseteq\mathcal{D}({ \bar A})$ and
$\mathcal{D}({S^{*m}})\subseteq\mathcal{D}({\bar A_0})$. By Lemma \ref{komm1} the commutator
${\mathrm{ad}}((S-z)^{-1} ,\bar A)$ is closable. Since it contains the densely defined and
bounded operator $C$,  its closure belongs to $\mathbf{B}(\mathcal{K})$.  By \eqref{dszac} we
have
  \begin{equation}\label{c1}
    |z_n|\norm{{\mathrm{ad}}((S-z_n)^{-1} ,\bar A)}\leq|z_n|\norm{(S-z_n)^{-1}}\norm{{\mathrm{ad}}(\bar A,S)(S-z_n)^{-1}}\leq td,
    \end{equation}
with $t=\sup_{n\in\mathbb{N}}  \norm{z_n(S-z_n)^{-1} }$, which is finite because of
\eqref{Omega}. By the second part of Lemma \ref{komm1} we  have
\begin{equation}\label{supkolejne}
 \sup_{n\in\mathbb{N}}|z_n|^m\norm{{\mathrm{ad}}((S-z_n)^{-m},\bar A)}<\infty.
 \end{equation}
By  of Theorem \ref{intro} applied to  $T_n=z_n^m(S-z_n)^{-m}$
 we get $\bar A_0=A^*$.

To prove the second statement of the theorem fix $z\in \rho(S)$ and $w\in\rho(A)$. One can
easily check, that (iii') implies that $Cf=0$  for $f\in\mathcal{D}( C)$, consequently
$\overline{{\mathrm{ad}}((S-z)^{-1},A)}=0$.  Observe that
$$
(A-w)^{-1}{\mathrm{ad}}((S-z)^{-1},A)(A-w)^{-1}={\mathrm{ad}}((A-w)^{-1},(S-z)^{-1}),
$$
where both operators are in $\mathbf{B}(\mathcal{K})$ by (i'). In consequence both of them are
zero.
\end{proof}

\section{Differential operators}\label{DO}

As an application of Theorem \ref{blizej} consider the  differential operator
\[
     Au:= {\mathrm i}^{-1}\sum_{l=1}^m Q_l\frac{\partial
    u}{\partial x_l},\quad u\in\mathcal{D}({A})={(\mathcal C^\infty_0({\mathbb{R}}^m))^k},
    \]
in the Hilbert space $\mathcal{K}=(L^2({\mathbb{R}}^m))^k$ ($k,m\in\mathbb{N}$). We assume
that
 $Q_1,\dots, Q_m:{\mathbb{R}}^m\to\mathbb{C}^{k\times k}$ are $\mathcal{C}^2$--functions. First let us also note, that if $P_1,P_2$ are complex polynomials of $m$ variables then the operator
$P_1(\frac{\partial u}{\partial x_1},\dots, \frac{\partial u}{\partial x_m})$ dominates
$P_2(\frac{\partial u}{\partial x_1},\dots, \frac{\partial u}{\partial x_m})$ on ${(\mathcal
C^\infty_0({\mathbb{R}}^m))^k}$ if and only if for some $c>0$
\begin{equation}\label{diffdomin}
|P_2(\zeta)|\leq c( 1+ |P_1(\zeta)|),\quad \zeta\in{\mathbb{R}}^m.
\end{equation}
Indeed, the case $k=1$ is well known (see e.g. \cite{Limanskii04}) and the multidimensional
case is a simple consequence of the one--dimensional one.  For other types  of domination
inequalities for differential operators we refer the reader to
\cite{Hormander55,LimanskiiMalamud08} and the papers quoted therein.    Let us introduce the
following notation
$$
Q(x)=(Q_r^*(x)Q_l(x))_{r,l=1}^m\in\mathbb{C}^{mk\times mk},\qquad x\in{\mathbb{R}}^m,
$$
$$
Q^{(*)}(x)=(Q_r(x)Q_l^*(x))_{r,l=1}^m\in\mathbb{C}^{mk\times mk},\qquad x\in{\mathbb{R}}^m.
$$

\begin{prop}
 If
 \begin{equation}\label{QL}
 \frac{\partial Q_j }{\partial x_i},\frac{\partial^2 Q_j }{\partial x_i x_h},  \in
L^\infty({\mathbb{R}}^m),\quad h,i,j=1,\dots, m
\end{equation}
and for some $c_1>0$ one has
\begin{equation}\label{QQ}
c_1^{-1} Q(x)\leq  Q^{(*)}(x) \leq c_1 Q(x) 
,\quad x\in{\mathbb{R}}^m
\end{equation}
and for some $c_2>0$
\begin{equation}\label{QI}
 Q(x) \leq c_2 I_{\mathbb{C}^{mk\times mk}}
,\quad x\in{\mathbb{R}}^m,
\end{equation}
 then $\mathcal{D}(\bar A)=\mathcal{D}(A^*)$.
\end{prop}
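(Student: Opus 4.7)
The plan is to apply Theorem~\ref{blizej} in instance~(a3), with $A_0:=A^*\!\!\mid_{(\mathcal C^\infty_0({\mathbb{R}}^m))^k}$ and $S$ the selfadjoint closure in $\mathcal{K}$ of $-\Delta$ acting on $(\mathcal C^\infty_0({\mathbb{R}}^m))^k$. The sequence $z_n:={\mathrm i}n$ satisfies \eqref{Omega}. Once Theorem~\ref{blizej} produces $\bar A_0=A^*$, the graph-norm equivalence of $A$ and $A_0$ on $(\mathcal C^\infty_0({\mathbb{R}}^m))^k$ will deliver $\mathcal{D}(\bar A)=\mathcal{D}(\bar A_0)=\mathcal{D}(A^*)$.

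First I would record two preliminaries. Testing (QI) against block vectors supported in a single coordinate forces $\norm{Q_l(x)}\leq\sqrt{c_2}$ for every $x$ and every $l$, so each $Q_l$ is a uniformly bounded matrix-valued function. Integration by parts on $(\mathcal C^\infty_0)^k$ yields
\[
A^*u=-{\mathrm i}\sum_l Q_l^*\frac{\partial u}{\partial x_l}-{\mathrm i}\sum_l\frac{\partial Q_l^*}{\partial x_l}u,
\]
together with the quadratic-form identity $\norm{Au}^2=\int\seq{Q(x)\nabla u,\nabla u}\,dx$ and its analogue for the principal part of $\norm{A^*u}^2$ involving $Q^{(*)}$. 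Comparing these two forms via (QQ) and absorbing the zeroth-order remainder by (QL) produces the (a3)-type equivalence $\norm{Au}+\norm u\asymp\norm{A^*u}+\norm u$ on $(\mathcal C^\infty_0)^k$; hence $\mathcal{D}(\bar A)=\mathcal{D}(\bar A_0)$, and it suffices to prove $\bar A_0=A^*$. The same boundedness inputs let $A$ and $A_0$ extend by continuity from $(\mathcal C^\infty_0)^k$ to $(H^1({\mathbb{R}}^m))^k$; thus $\mathcal{D}^\infty(S)\subseteq(H^1)^k\subseteq\mathcal{D}(\bar A)\cap\mathcal{D}(\bar A_0)$, verifying hypothesis~(i) of Theorem~\ref{blizej}, and (ii) follows by a straightforward distributional computation on a suitable dense subset of $(\mathcal C^\infty_0)^k$.

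The core of the argument is hypothesis~(iii). Take $\mathcal{D}=(\mathcal C^\infty_0({\mathbb{R}}^m))^k$, a well-known core for $S$. Since $\Delta$ commutes with each $\partial_l$, a direct computation on $\mathcal{D}$ gives
\[
{\mathrm{ad}}(S,\bar A)u={\mathrm i}\sum_l\Bigl[(\Delta Q_l)\frac{\partial u}{\partial x_l}+2\sum_h\frac{\partial Q_l}{\partial x_h}\frac{\partial^2 u}{\partial x_h\partial x_l}\Bigr].
\]
Hypothesis (QL) bounds the matrix coefficients $\Delta Q_l$ and $\partial_h Q_l$ uniformly, while Plancherel provides the classical $L^2$-estimates $\norm{\partial_h\partial_l u}\leq\norm{\Delta u}$ and $\norm{\partial_l u}\leq\varepsilon\norm{\Delta u}+c_\varepsilon\norm u$. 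Combining these yields
\[
\norm{{\mathrm{ad}}(S,\bar A)u}\leq c\bigl(\norm u+\norm{Su}\bigr),\qquad u\in\mathcal{D},
\]
so $S$ dominates ${\mathrm{ad}}(S,\bar A)$ on $\mathcal{D}$. Theorem~\ref{blizej} then produces $\bar A_0=A^*$, and combined with $\mathcal{D}(\bar A)=\mathcal{D}(\bar A_0)$ from the second paragraph this completes the proof.

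The main obstacle is the commutator computation in (iii): arranging ${\mathrm{ad}}(S,\bar A)$ so that only first- and second-order derivatives of $u$ appear, each paired with a (QL)-bounded coefficient, and then invoking the standard elliptic $L^2$ estimates. The remaining verifications reduce to routine bookkeeping with (QI), (QQ) and (QL).
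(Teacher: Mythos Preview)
Your proposal is correct and follows essentially the same route as the paper: both set $A_0=A^*\!\!\mid_{(\mathcal C^\infty_0)^k}$, take $S$ to be (the closure of) $-\Delta$, verify the (a3) graph-norm equivalence via (QQ)--(QL)--(QI), compute ${\mathrm{ad}}(S,A)$ on $(\mathcal C^\infty_0)^k$, and bound it by $S$ using the standard $L^2$ estimates (the paper phrases this via the polynomial criterion \eqref{diffdomin}, you via Plancherel). Your commutator formula even carries the correct factor $2$ that the paper's displayed expression omits.
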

\begin{proof}

 First we will show that the graph norms of $A$ and
$A^{*}$ are equivalent on ${(\mathcal C^\infty_0({\mathbb{R}}^m))^k}$. Denoting by
$\seq{\cdot,-}$ the standard inner product
 in $\mathbb{C}^k$ and $\mathbb{C}^{mk}$ and setting
 $$
\partial u:=\left(\frac{\partial u_1 }{\partial x_1},\dots,\frac{\partial u_k }{\partial x_1},\dots,\frac{\partial u_1 }{\partial x_m},\dots,\frac{\partial u_k }{\partial x_m}\right)
$$
one has
$$
\norm{Au}^2=\int_{{\mathbb{R}}^m}\sum_{l,r=1}^m\seq{Q_r^*(x)Q_l(x) \frac{\partial
    u}{\partial x_l}(x) ,\frac{\partial
    u}{\partial x_r}(x) }dx=
$$\begin{equation}\label{QA}
\int_{{\mathbb{R}}^m} \seq{ Q(x) \partial u(x),\partial u(x) }dx.
\end{equation}
Furthermore, note that
$$
A^*u=\mathrm{i}^{-1}\sum_{l=1}^m \frac{\partial}{\partial x_l} Q_l^*u=
\mathrm{i}^{-1}\sum_{l=1}^m \frac{\partial Q_l^*}{\partial x_l} u+\mathrm{i}^{-1}\sum_{l=1}^m Q_l^*\frac{\partial u}{\partial x_l} u.
$$
By \eqref{QL} the first summand on the right hand side is a bounded operator of $u$.  Thus the graph norms  of $A^*$ and $B=\sum_{l=1}^m Q_l^*\frac{\partial u}{\partial x_l}$ are equivalent on $(\mathcal{C}_0(\mathbb{R}^m))^k$. Furthermore, for $u\in (\mathcal{C}_0(\mathbb{R}^m))^k$ one has 
$$
\norm{Bu}^2=\int_{{\mathbb{R}}^m}\sum_{l,r=1}^m\seq{Q_r(x)Q^*_l(x) \frac{\partial
    u}{\partial x_l}(x) ,\frac{\partial
    u}{\partial x_r}(x) }dx=
$$ $$
\int_{{\mathbb{R}}^m} \seq{ Q^{(*)}(x) \partial u(x),\partial u(x) }dx,
$$
which, together with \eqref{QQ} and \eqref{QA} implies the equivalence of graph norms of  $B$ and $A$, and hence $A^*$ and $A$, on $(\mathcal{C}_0(\mathbb{R}^m))^k$.

Consider  the essentially selfadjoint, nonnegative operator
$$
Su=-\frac{\partial^2 u }{\partial x^2_1}-\cdots-\frac{\partial^2 u }{\partial x^2_m},\quad
u\in{(\mathcal C^\infty_0({\mathbb{R}}^m))^k}=\mathcal{D}( S).
$$
Note that by \eqref{QI} and \eqref{QA}  one has 
$$
\norm{Au}^2\leq c_2  \int_{{\mathbb{R}}^m} \seq{  \partial u(x), \partial u(x) }dx =  \seq{Su,u}= \norm{S^{1/2} u}^2.
$$
Therefore $S^{1/2}$, and in consequence $S$, dominates $A$ on ${(\mathcal C^\infty_0({\mathbb{R}}^m))^k}$. Furthermore,
$$
{\mathrm i}\cdot {\mathrm{ad}}(S,A)u=-\sum_{l,r=1}^m\frac{\partial^2}{\partial
x^2_l}\left(Q_r\frac{\partial
    u}{\partial x_r}\right)+ \sum_{l,r=1}^m Q_l\frac{\partial
    }{\partial x_l}\left(\frac{\partial^2u}{\partial x^2_r}\right)=
$$ $$
-\sum_{l,r=1}^m\frac{\partial Q_r}{\partial x_l}\frac{\partial^2    u}{\partial x_l
x_r}-\sum_{l,r=1}^m\frac{\partial^2Q_r}{\partial x^2_l}\frac{\partial
    u}{\partial x_r}.
$$
Application of \eqref{diffdomin} to the right hand side of the inequality
$$
\norm{{\mathrm{ad}}(S,A)u}\leq\sum_{l,r=1}^m\norm{\frac{\partial Q_r}{\partial
x_l}}_{L^\infty({\mathbb{R}}^m)}\norm{\frac{\partial^2
    u}{\partial x_l x_r}}+\sum_{l,r=1}^m\norm{\frac{\partial^2Q_r}{\partial x^2_l}}_{L^\infty({\mathbb{R}}^m)}\norm{\frac{\partial
    u}{\partial x_r}}.
$$
shows that $S$ dominates ${\mathrm{ad}}(S,A)$ on ${(\mathcal C^\infty_0({\mathbb{R}}^m))^k}$.
Hence, by Theorem \ref{blizej} we get $\mathcal{D}(\bar A)=\mathcal{D}(A^*)$.
\end{proof}

\section*{Acknowledgment}

The author is indebted to Doctor Dariusz Cicho\'n, Professor Jan Stochel and Professor
Franciszek Hugon Szafraniec for an inspiration and valuable discussions.

\end{document}